\newcommand*{\rom}[1]{\expandafter\@slowromancap\romannumeral #1@}
\newcommand{\R}{\mathbb{R}}
\newtheorem{theorem}{Theorem}[section]
\newtheorem{lemma}[theorem]{Lemma}
\theoremstyle{definition}
\newtheorem{definition}[theorem]{Definition}
\newtheorem{example}[theorem]{Example}
\theoremstyle{remark}
\newtheorem{remark}[theorem]{Remark}
\numberwithin{equation}{section}
\DeclareMathOperator*{\argmin}{arg\,min}
\newcommand{\e}{\varepsilon}
\newcommand{\lt}{\left}
\newcommand{\rt}{\right}
\newcommand{\bq}{\begin{equation}}
\newcommand{\eq}{\end{equation}}
\newcommand{\bfx}{{\bf x}}
\newcommand{\bfy}{{\bf y}}
\newcommand{\calC}{\mathcal C}
\begin{document}

\title{One dimensional consensus based algorithm for non-convex optimization}

\author{Young-Pil Choi}
\address{Department of Mathematics, Yonsei University, Seoul 03722, Republic of Korea}
\email{ypchoi@yonsei.ac.kr}

\author{Dowan Koo}
\address{Department of Mathematics, Yonsei University, Seoul 03722, Republic of Korea}
\email{dowan.koo@yonsei.ac.kr}

%\date{\today}

\thanks{\textbf{Acknowledgment.} YPC has been supported by NRF grant (No. 2017R1C1B2012918), POSCO Science Fellowship of POSCO TJ Park Foundation, and Yonsei University Research Fund of 2020-22-0505. }

\begin{abstract}
We analyze the consensus based optimization method proposed in \cite{PTTM17} in one dimension. We rigorously provide a quantitative error estimate between the consensus point and global minimizer of a given objective function. Our analysis covers general objective functions; we do not require any structural assumption on the objective function.  
\end{abstract}

% ----------------------------------------------------------------

\maketitle

% ----------------------------------------------------------------

%%%%%%%%%%%%%%%%%%%%%%%%%%%%%%%%%%%%%%%%%%%%%%%%%%%%%%%%%
%
%
%
%
%
%%%%%%%%%%%%%%%%%%%%%%%%%%%%%%%%%%%%%%%%%%%%%%%%%%%%%%%%%
%\vspace{-.5cm}
\section{Introduction}
 In the current work, we study the global minimization problem:
\[
\min_{x \in \R} f(x),
\]
where $f: \R \to \R_+$ is a given objective function that admits a unique global minimizer $x_* \in \R$. 

Global optimization problems appear in various research fields such as physics, economics, machine learning and artificial intelligence \cite{BDGG09,KE95,SLA12}. Among various optimization methods, we are interested in the consensus based optimization method  \cite{PTTM17}. More precisely, we consider an interacting particle system with position $x^i
_t \in \R$ at time $t$ governed by
\begin{equation}\label{main_eq}
\frac{dx_t^i}{dt} = -\lambda(x_t^i-m_t), \quad i =1,\dots,N, \quad t > 0,
\end{equation}
where $\lambda>0$ and $m_t$ is a weighted average of particles given by
\[
m_t = \sum_{i=1}^{N} x_t^i \psi_t^i, \qquad \psi_t^i= \frac{\omega_f^\alpha(x_t^i)}{\sum_{k=1}^{N}\omega_f^\alpha(x_t^k)}.
\]
Here the weight function $\omega_f^\alpha(x)$ is chosen as $\omega_f^\alpha(x)=\exp{(-\alpha f(x))}$ with $\alpha > 0$, based on the {\it Laplace principle} \cite{DZ10}: for any absolutely continuous probability measure $\rho$ with respect to the Lebesgue measure, it holds
\[
\lim_{\alpha \to \infty} \lt(-\frac1\alpha \log\lt(\int_\R \omega_f^\alpha \,d\rho \rt) \rt) = \inf_{x \in supp(\rho)} f(x).
\]
This shows that the weight function plays a crucial role in finding the global minimizer of the objective function for sufficiently large $\alpha >0$. Note that when $\alpha = 0$, $\psi^i_t \equiv 1/N$ for all $i=1,\dots,N$, and the uniform consensus point is just the arithmetic average of initial particles, which gives no information on the location of the global minimizer of a given objective function $f$, see Lemma \ref{lem_energy} (ii) below.

In \cite{PTTM17}, the particle system \eqref{main_eq} is first proposed and its mean-field equation is also derived. For the mean-field equation, the consensus behavior of solutions towards a neighborhood of the global minimizer is analyzed under certain assumptions on $f$ and $\alpha$. Later, the diffusive case is also considered in \cite{CCTT18, CJLZ21} at the continuum level. For the particle system \eqref{main_eq}, the global consensus behavior for the system \eqref{main_eq} is studied in a recent work \cite{HJK20}, however, it is not investigated the relation between the consensus point and the global minimizer of $f$. We refer to \cite{Tpre} and references therein for recent surveys on the consensus based optimization algorithm and its variants. 

The main purpose of this paper is to provide a sharp quantitative error estimate between the consensus point and the global minimizer for a large class of objective functions in one dimension. 

\begin{theorem}\label{main_thm} Let $a,b \in \R$ with $a<b$. Suppose $f \in \calC^2(a,b)\cap \calC[a,b]$ has a unique global minimizer $x_* \in (a,b)$ satisfying $f'(x_*)=0$ and $f''(x_*) \neq 0$. Consider the consensus based optimization model \eqref{main_eq} with $N=2$. If $x_* \in (x^1_0, x^2_0) \subset [a,b]$, then $x^1_t, x^2_t \to x_\infty$ as $t$ goes to infinity, and there exists $\alpha_1>0$ such that $|x_\infty - x_*| \leq \frac{c}{\sqrt{\alpha}}$ for any $\alpha>\alpha_1$ for some $c>0$ independent of $\alpha$.
\end{theorem}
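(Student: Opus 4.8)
First I would set up the two-particle dynamics explicitly. With $N=2$, write $d = x^2_t - x^1_t$ for the gap. Subtracting the two equations in \eqref{main_eq} gives $\dot d = -\lambda d$, so $d_t = d_0 e^{-\lambda t} \to 0$; in particular both particles converge to a common limit $x_\infty$, and the order $x^1_t < x^2_t$ is preserved for all $t$, so $x_\infty \in [x^1_0, x^2_0] \subset [a,b]$. This already yields the convergence claim; the substance is locating $x_\infty$. For that I would track the weighted mean $m_t$. Since $x^1_t \le m_t \le x^2_t$ and both endpoints converge to $x_\infty$, we get $m_t \to x_\infty$ as well. The key is to find an invariant (or monotone) quantity. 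I expect that the relevant conserved-type quantity is $m_t$ itself or a closely related functional: differentiating $m_t = x^1_t \psi^1_t + x^2_t \psi^2_t$ and using $\dot x^i = -\lambda(x^i - m_t)$, one finds (this is presumably Lemma~\ref{lem_energy}) that $\dot m_t$ has a definite sign or that some energy $\mathcal{E}_t$ decreases, forcing $x_\infty$ to be characterized by a balance condition involving $\omega_f^\alpha$ at the two limiting positions — but since both particles collapse to the same point $x_\infty$, the weights degenerate, so the characterization of $x_\infty$ must come from integrating the dynamics rather than from a stationarity condition at $t=\infty$.

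The cleaner route, which I would pursue as the main line, is to use the exact solution of the gap together with an integral identity for $m_t$. From $x^1_t = m_t - \tfrac{1}{1+r_t} d_t$-type expressions (where $r_t = \psi^2_t/\psi^1_t = \omega_f^\alpha(x^2_t)/\omega_f^\alpha(x^1_t)$), one obtains a closed scalar ODE. Integrating \eqref{main_eq} in $t$ and using $\int_0^\infty d_t\,dt = d_0/\lambda < \infty$, I would derive an identity of the form
\[
x_\infty = x^1_0 + \lambda \int_0^\infty (m_t - x^1_0)\,dt \quad\text{(and symmetrically for the other particle)},
\]
which, combined with the weighted-average structure, expresses $x_\infty$ as a weighted average of the trajectory against $\omega_f^\alpha$. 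Heuristically this should concentrate, as $\alpha \to \infty$, near the point where $f$ is smallest along the trajectory, i.e. near $x_*$, since $x_* \in (x^1_0, x^2_0)$ lies in the swept interval. The $\tfrac{1}{\sqrt\alpha}$ rate is exactly the Laplace-method scale: near $x_*$, $f(x) \approx f(x_*) + \tfrac12 f''(x_*)(x-x_*)^2$, so $\omega_f^\alpha$ looks like a Gaussian of width $\sim \alpha^{-1/2}$, and the weighted average of $x$ against it sits within $O(\alpha^{-1/2})$ of $x_*$.

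Concretely, the steps in order: (1) establish convergence and order preservation from $\dot d = -\lambda d$; (2) show $m_t \to x_\infty$ and derive the scalar integral/ODE characterization of $x_\infty$; (3) prove a lower bound $x_\infty > x_* - c\alpha^{-1/2}$ by showing the particle starting to the left, $x^1_t$, cannot linger near $x^1_0$ but is pulled rightward fast once $m_t$ exceeds it, and quantify using the Laplace estimate on $\int \omega_f^\alpha$ over $[a, x_*]$ versus a neighborhood of $x_*$; (4) symmetrically prove $x_\infty < x_* + c\alpha^{-1/2}$ using $x^2_t$; (5) combine. The $\mathcal{C}^2$ hypothesis with $f''(x_*) \neq 0$ enters precisely in step (3)-(4): the quadratic lower bound $f(x) - f(x_*) \ge \kappa (x-x_*)^2$ near $x_*$ (for some $\kappa > 0$, valid on a fixed neighborhood by Taylor with the $\mathcal{C}^2$ assumption and $f''(x_*) > 0$, noting $f''(x_*) \ge 0$ at a minimum so $f''(x_*) \neq 0$ forces it positive) gives the Gaussian comparison, while a matching upper bound $f(x) - f(x_*) \le K(x-x_*)^2$ controls the mass near $x_*$ from below; away from $x_*$, uniqueness of the global minimizer and continuity on the compact set $[a,b]$ give $f \ge f(x_*) + \delta$ outside any fixed neighborhood, so that region is exponentially negligible.

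\textbf{Main obstacle.} The delicate point is step (3)-(4): controlling $x_\infty$ requires estimating the weighted average \emph{along the whole trajectory}, not just the static Laplace average of $x$ against $\omega_f^\alpha\,dx$. Because the particles move, the effective "measure" against which we average is the occupation measure of the trajectory weighted by $\omega_f^\alpha$, and one must show this occupation measure does not spend anomalously much time far from $x_*$ in a way that survives the $\alpha \to \infty$ limit. I expect this to require a bootstrap: first a crude bound showing $x_\infty$ lies in \emph{some} small neighborhood of $x_*$ (using that the dynamics contract and the weights strongly favor low-$f$ positions), then refine to the sharp $\alpha^{-1/2}$ scale using the quadratic behavior of $f$ at $x_*$. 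Handling the possibility that $x^1_0$ or $x^2_0$ themselves have small $f$-value (competing with $x_*$) is where uniqueness of the global minimizer and the strict inequality $f(x_0^i) > f(x_*)$ must be leveraged quantitatively, yielding the constant $\alpha_1$ beyond which the $x_*$-neighborhood dominates.
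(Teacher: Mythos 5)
Your steps (1)--(2) reproduce the paper's Lemma \ref{lem_energy} and are fine, and your Laplace-method heuristic does predict the correct $\alpha^{-1/2}$ scale. But the core of the argument --- your steps (3)--(4) --- is precisely the ``main obstacle'' you flag and then leave unresolved, and the route you sketch for it is not the one that can work here. With $N=2$ the weights compare only the two particles to each other, $\psi^2_t=\bigl(1+e^{\alpha(f(x^2_t)-f(x^1_t))}\bigr)^{-1}$, so the static Laplace integral $\int_{[a,b]}\omega_f^\alpha\,dx$ never enters the dynamics and there is no occupation measure weighted by $\omega_f^\alpha$ whose concentration you could invoke; all quantitative information must be extracted from the pairwise difference $f(x^2_t)-f(x^1_t)$. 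Also, your proposed identity $x_\infty=x^1_0+\lambda\int_0^\infty(m_t-x^1_0)\,dt$ should read $x_\infty=x^1_0+\lambda\int_0^\infty(m_t-x^1_t)\,dt=x^1_0+\lambda\int_0^\infty\psi^2_t(x^2_t-x^1_t)\,dt$; the latter form is the one the paper exploits.

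The mechanism missing from your plan is threefold. First, the one-dimensional observation that if $x_\infty\neq x_*$ then one particle crosses $x_*$ in finite time, which reduces the whole problem to bounding the overshoot past $x_*$ from the crossing time onward (this also disposes of your worry about $f(x^i_0)$ competing with $f(x_*)$). Second, the change of variables $\tau=x^2_t-x^1_t=(x^2_{T_0}-x^1_{T_0})e^{-\lambda(t-T_0)}$, which converts the displacement $\lambda\int_{T_0}^{\infty}\psi^2_t(x^2_t-x^1_t)\,dt$ into the explicit integral $\int_0^{x^2_{T_0}-x^1_{T_0}}(1+e^{\alpha c\tau})^{-1}\,d\tau\leq \ln 2/(\alpha c)$, once one has a linear lower bound $f(x^2_t)-f(x^1_t)\geq c\,(x^2_t-x^1_t)$ (Lemma \ref{lem1}). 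Third, on the uniformly convex neighborhood of $x_*$ guaranteed by $f''(x_*)\neq 0$, the constant $c$ can be taken proportional to $x_\infty-x_*$ itself, which yields the self-referential inequality $x_\infty-x_*\leq \ln 2/\bigl(\alpha c_f(x_\infty-x_*)\bigr)$ and hence the $\alpha^{-1/2}$ rate (Lemma \ref{lem2}) --- not a Gaussian-width computation. Your ``bootstrap'' is in spirit close to the paper's two-stage proof (an $O(\alpha^{-1})$ linear estimate while the far particle is outside the convex neighborhood, then the convex estimate once both particles are inside it), but without the crossing-time reduction, the change of variables, and the self-referential convex bound you have no quantitative handle on either stage, so as written the proposal does not constitute a proof.
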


\begin{remark} We can also consider the consensus based optimization model \eqref{main_eq} with $N>2$ to have a similar quantitative estimate for $|x_\infty - x_*|$. However, in this case, the bound may depend on some function $e=e(N)$ satisfying $e(N) \to \infty$ as $N \to \infty$. We give an example on this in Section \ref{ssec:N>2} below. This validates that taking into account the two-particle system is enough for the global optimization problems in one dimension, and moreover it gives an even better error estimate. 
\end{remark}

\begin{remark} We would like to emphasize that we do not require any structural assumption on the objective function. For the initial data, we only assume $x_* \in (x^1_0, x^2_0)$, i.e., the initial positions of particles do not need to be close to each other. 
\end{remark}

\begin{remark}
Our main strategy for the proof of Theorem \ref{main_thm} relies on the fact that the system \eqref{main_eq} is posed in one dimension. Note that if $x_\infty \neq x_*$, then, under the  assumptions of Theorem \ref{main_thm}, we have that at least one of two particles passes the point $x_*$ in a finite time. This observation plays a crucial role in analyzing the sharp quantitative error estimates between $x_\infty$ and $x_*$. Unfortunately, it is not clear how to employ this strategy for the multi-dimensional case $(d > 1)$.
\end{remark}

In the next section, we present {\it a priori} estimate showing the global consensus behavior for the system \eqref{main_eq} and the global existence and uniqueness of classical solutions. In Section \ref{sec:Pf}, we provide the details of the proof for our main theorem, Theorem \ref{main_thm}.

%%%%%%%%%%%%%%%%%%%%%%%%%%%%%%%%%%%%%%%%%%%%%%%%%%%%%%%%%
%
%
%
%
%
%%%%%%%%%%%%%%%%%%%%%%%%%%%%%%%%%%%%%%%%%%%%%%%%%%%%%%%%%

\section{Well-posedness and emergence of global consensus behavior}\label{sec:WP}

We first provide {\it a priori} estimates of solutions for the particle system \eqref{main_eq} which will be frequently used throughout this paper. We denote $\bfx_t := (x^1_t,\dots,x^N_t)$ in what follows.

\begin{lemma}\label{lem_energy} Let $\bfx_t$ be a classical solution of the system \eqref{main_eq}. Then the followings hold.
\begin{itemize}
\item[(i)] For any $i,j \in \{1,\dots,N\}$, we have
\[
x^i_t - x^j_t = (x^i_0 - x^j_0)\,e^{-\lambda t}, \qquad \forall \, t \geq 0.
\]
\item[(ii)] The average $\bar x_t:= \frac1N \sum_{j=1}^N x^j_t$ can be expressed as
\[
\bar x_t = \bar x_0 + \frac\lambda N \sum_{i,j=1}^N (x^i_0 - x^j_0) \int_0^t \psi^i_s e^{-\lambda s}\,ds, \qquad \forall \, t \geq 0.
\]
In particular, we have
\[
|\bar x_t| \leq | \bar x_0| + \lt(\max_{1 \leq i,j \leq N}|x^i_0 - x^j_0|\rt)(1 - e^{-\lambda t}), \qquad \forall \, t \geq 0.
\]
\end{itemize}
\end{lemma}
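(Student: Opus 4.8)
The plan is to read off both identities directly from the ODE system \eqref{main_eq}, exploiting that the weighted average $m_t$ is common to all $N$ equations and that the weights are normalized, $\sum_{i=1}^N \psi_t^i = 1$. No fixed-point or Gr\"onwall argument is needed; everything is an explicit linear computation along a given classical solution.

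For part (i), I would subtract the $i$-th and $j$-th equations of \eqref{main_eq}. The $m_t$-terms cancel, so the difference $d_t^{ij} := x_t^i - x_t^j$ solves the scalar linear ODE $\tfrac{d}{dt} d_t^{ij} = -\lambda\, d_t^{ij}$; uniqueness for linear ODEs then gives $d_t^{ij} = d_0^{ij}\, e^{-\lambda t}$.

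For part (ii), I would first average the $N$ equations: using $\sum_i \psi_t^i = 1$ one gets $\tfrac{d}{dt}\bar x_t = -\lambda(\bar x_t - m_t) = \lambda \sum_{i=1}^N (x_t^i - \bar x_t)\psi_t^i$. Writing $x_t^i - \bar x_t = \tfrac1N \sum_{j=1}^N (x_t^i - x_t^j)$ and inserting part (i) yields $\tfrac{d}{dt}\bar x_t = \tfrac{\lambda}{N} e^{-\lambda t} \sum_{i,j=1}^N (x_0^i - x_0^j)\psi_t^i$, and integrating from $0$ to $t$ gives the stated formula for $\bar x_t$. For the inequality, I would bound $|x_0^i - x_0^j| \le \max_{1\le k,l\le N}|x_0^k - x_0^l|$ inside the sum, pull this constant out, and collapse the double sum using $\sum_{i,j=1}^N \psi_s^i = N$ (again by $\sum_i \psi_s^i = 1$) together with $\int_0^t e^{-\lambda s}\,ds = (1 - e^{-\lambda t})/\lambda$; the factor $\lambda$ cancels and one arrives at $|\bar x_t| \le |\bar x_0| + \big(\max_{1\le k,l\le N}|x_0^k - x_0^l|\big)(1 - e^{-\lambda t})$.

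There is no genuine obstacle: the argument rests only on the linearity of the mean-field coupling and on the normalization $\sum_i \psi_t^i = 1$. The single point deserving a remark is that the manipulations are legitimate along the whole existence interval of the classical solution, which is fine because $\psi_t^i$ is well-defined and continuous in $t$ — the denominator $\sum_k \omega_f^\alpha(x_t^k)$ is strictly positive since $\omega_f^\alpha = \exp(-\alpha f) > 0$ — so differentiating and integrating cause no difficulty.
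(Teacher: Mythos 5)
Your proposal is correct and follows essentially the same route as the paper: subtracting the equations so that $m_t$ cancels for part (i), then averaging, using $\sum_i \psi_t^i = 1$ and the identity $x_t^i - \bar x_t = \frac1N\sum_j(x_t^i - x_t^j)$ together with part (i) for part (ii), and bounding the double sum by the maximal initial spread for the final inequality. No gaps; the remark about positivity of the denominator in $\psi_t^i$ is a sensible (if minor) addition.
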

\begin{proof} (i) It is clear that for any $i,j \in \{1,\dots,N\}$
\[
(x_t^i - x^j_t)'  = - \lambda(x_t^i - x_t^j),
\]
where ${}' := \frac{d}{dt}$, 
thus solving this differential equation concludes the desired result.

(ii) We first find that $\bar x_t$ satisfies
\begin{equation}\label{est_barx}
\bar x_t ' = -\lambda (\bar x_t - m_t) = \lambda \sum_{i=1}^N (x^i_t - \bar x_t)\psi^i_t,
\end{equation}
due to $\sum_{i=1}^N \psi^i_t = 1$.
On the other hand, we obtain from (i) that 
\[
x^i_t - \bar x_t = \frac1N\sum_{j=1}^N (x^i_t - x^j_t) = \frac1N\sum_{j=1}^N (x^i_0 - x^j_0)\,e^{-\lambda t}.
\]
This combined with \eqref{est_barx} yields the first assertion in (ii). The second one simply follows due to  $\sum_{i=1}^N \psi^i_t = 1$.
\end{proof}
\begin{remark} (i) It is clear that the estimates in Lemma \ref{lem_energy} do not depend on dimensions, thus it also holds in higher dimensions. 

(ii) Lemma \ref{lem_energy} (i) only shows the consensus behavior of solutions, which does not provide any information on the location of the global minimizer of the given objective function $f$.
\end{remark}

As mentioned above, the particle system \eqref{main_eq} is taken into account in \cite{HJK20,PTTM17}, however, the well-posedness is not discussed. In this regard, we establish the well-posedness theory for the particle system \eqref{main_eq}.

\begin{theorem} Suppose that the objective function $f: \R \to \R_+$ is Lipschitz continuous. The particle system \eqref{main_eq} has a unique global-in-time classical solution $\bfx_t$ for any initial data $\bfx_0$ satisfying $|\bfx_0| < \infty$.
\end{theorem}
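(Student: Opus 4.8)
The plan is to set up the system as an autonomous ODE on $\R^N$ and invoke the Picard--Lindel\"of / Cauchy--Lipschitz theorem for local existence and uniqueness, then upgrade to a global solution by an a priori bound ruling out finite-time blow-up. First I would write the right-hand side as a vector field $F : \R^N \to \R^N$ with components $F_i(\bfx) = -\lambda\bigl(x^i - m(\bfx)\bigr)$, where $m(\bfx) = \sum_{i} x^i \omega_f^\alpha(x^i) / \sum_k \omega_f^\alpha(x^k)$. The key structural observation is that $\omega_f^\alpha(x) = \exp(-\alpha f(x))$ satisfies $0 < \omega_f^\alpha(x) \le 1$ everywhere, so the denominator $\sum_k \omega_f^\alpha(x^k)$ is bounded below by a strictly positive constant (indeed $\ge e^{-\alpha \max_k f(x^k)}$, and more crudely stays away from $0$ on bounded sets), hence $m(\bfx)$ is well-defined with no singularities and is in fact a convex combination of the $x^i$, so $\min_i x^i \le m(\bfx) \le \max_i x^i$.

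Next I would establish local Lipschitz continuity of $F$. Since $f$ is globally Lipschitz, $\omega_f^\alpha = e^{-\alpha f}$ is locally Lipschitz (composition of the Lipschitz function $-\alpha f$ with the smooth, locally Lipschitz $\exp$), and on any ball the map $\bfx \mapsto m(\bfx)$ is a ratio of locally Lipschitz functions with denominator bounded away from zero, hence locally Lipschitz; therefore $F$ is locally Lipschitz on $\R^N$. By the standard existence--uniqueness theorem for ODEs this gives a unique maximal classical solution $\bfx_t$ on a maximal interval $[0, T_{\max})$ with $T_{\max} \le \infty$.

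For globality, suppose $T_{\max} < \infty$; then $|\bfx_t| \to \infty$ as $t \uparrow T_{\max}$. But this is impossible by an a priori bound: from Lemma \ref{lem_energy}(i) the differences $x^i_t - x^j_t$ decay exponentially and stay bounded, and from Lemma \ref{lem_energy}(ii) the average $\bar x_t$ satisfies $|\bar x_t| \le |\bar x_0| + \bigl(\max_{i,j}|x^i_0 - x^j_0|\bigr)\bigl(1 - e^{-\lambda t}\bigr) \le |\bar x_0| + \max_{i,j}|x^i_0 - x^j_0|$ for all $t \in [0, T_{\max})$; combining these two facts, each $x^i_t = \bar x_t + (x^i_t - \bar x_t)$ remains in a bounded set depending only on $\bfx_0$ and $\lambda$, contradicting blow-up. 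Hence $T_{\max} = \infty$ and the solution is global. I expect the main (modest) obstacle to be the bookkeeping in the Lipschitz estimate for $m(\bfx)$ — in particular making explicit that the denominator is uniformly bounded below on the relevant bounded set so that the quotient rule for Lipschitz constants applies — but this is routine given the two-sided bound on $\omega_f^\alpha$; note also that the a priori bounds of Lemma \ref{lem_energy} hold for any classical solution on its interval of existence, which is exactly what the continuation argument needs.
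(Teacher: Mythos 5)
Your proposal is correct and follows essentially the same route as the paper: establish local Lipschitz continuity of the vector field on bounded sets (using the Lipschitz continuity of $f$, the bound $0<\omega_f^\alpha\le 1$, and the lower bound on the denominator $\sum_k\omega_f^\alpha(x^k)$) to get local existence and uniqueness, then invoke the a priori bound $|x^i_t|\le|\bar x_0|+\max_{i,j}|x^i_0-x^j_0|$ from Lemma \ref{lem_energy} to rule out finite-time blow-up. The only cosmetic difference is that the paper carries out the Lipschitz estimate explicitly entrywise for the stochastic matrix $M(\bfx)$ rather than quoting a quotient rule, but the content is identical.
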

\begin{proof} Note that our particle system can be written in a matrix form as:
\[
\bfx_t' = -\lambda(I - M(\bfx_t))\bfx_t, 
\]
where $M=M(\bfx_t)$ is $N \times N$ real matrix whose entries are given by
$
M_{ij} = M(\bfx)_{ij} = \psi^j(\bfx)$, $1 \leq i,j  \leq N.
$
In particular, $M$ is a stochastic matrix, i.e. each entry of $M$ is non-negative and $M {\bf 1} = {\bf 1}$ where ${\bf 1}=(1,\dots,1)^T$ and is of a rank 1 since all rows are identical. 

We first prove the local-in-time existence and uniqueness of classical solutions. For this, it suffices to show that $F(\bfx) = -\lambda(I - M(\bfx))\bfx$ is Lipschitz continuous on each compact set $B(0,R):= \{\bfx \in \R^N: |x| \leq R\}$ for some $R>0$. Note that
$
        F(\bfx)-F(\bfy) = -\lambda(I - M(\bfx))(\bfx-\bfy) + \lambda(M(\bfx)-M(\bfy))\bfy$, $\bfx, \bfy \in B(0,R).
$
Since $\psi^i \leq 1$ for all $i=1,\dots, N$, the first term on the right hand side of the above can be bounded from above by $C|\bfx - \bfy|$ for some $C > 0$ independent of $R$. On the other hand, straightforward computations give
\[
M(\bfx)_{ij}-M(\bfy)_{ij} =\frac{\omega_f^\alpha(x^j) - \omega_f^\alpha(y^j)}{\sum_{k=1}^{N}\omega_f^\alpha(x^k)}(1-\psi^j(\bfy)) -\frac{\sum_{k\neq j} (\omega_f^\alpha(x^k)-\omega_f^\alpha(y^k))}{\sum_{k=1}^{N}\omega_f^\alpha(x^k)}\psi^j(\bfy).
\]
This, together with the fact that
$
|\omega^\alpha_f(x) - \omega^\alpha_f(y)| \leq \alpha |f(x) - f(y)| \leq \alpha Lip(f) |x-y|$, for all $x,y \in \R,
$
deduces
\[
|M(\bfx)_{ij}-M(\bfy)_{ij}| \leq \frac{\alpha Lip(f)}{\sum_{k=1}^{N}\omega_f^\alpha(x^k)}\sum_{k=1}^{N}|x^k - y^k| \leq \frac{\alpha Lip(f)}{\sqrt N e^{-\alpha \max_{B(0,R)}f}} |\bfx - \bfy|
\]
for all $i,j = 1,\dots,N$. Here $Lip(f)$ denotes the Lipschitz constant of $f$. Thus we get
$
|(M(\bfx)-M(\bfy))\bfy| \leq CR|\bfx - \bfy|
$
for some $C>0$. 

To extend this local-in-time solution to the global one, it is enough to show that there exists a $R>0$, independent of $t$, such that $|\bfx_t| \leq R$ for all $t \geq 0$. For any $i \in \{1,\dots,N\}$, we use Lemma \ref{lem_energy} to estimate
\begin{align*}
|x^i_t| &\leq |x^i_t - \bar x_t| + |\bar x_t| \cr
&\leq \frac1N \sum_{j=1}^N |x^i_0 - x^j_0| e^{-\lambda t} + | \bar x_0| + \max_{1 \leq i,j \leq N}|x^i_0 - x^j_0|(1 - e^{-\lambda t})  \cr
&\leq | \bar x_0| + \max_{1 \leq i,j \leq N}|x^i_0 - x^j_0|.
\end{align*}
Since the right hand side of the above inequality is independent of $t$, this concludes the desired result and completes the proof.
\end{proof}

%%%%%%%%%%%%%%%%%%%%%%%%%%%%%%%%%%%%%%%%%%%%%%%%%%%%%%%%%
%
%
%
%
%
%%%%%%%%%%%%%%%%%%%%%%%%%%%%%%%%%%%%%%%%%%%%%%%%%%%%%%%%%

\section{Proof of Theorem \ref{main_thm}}\label{sec:Pf}

We expect that the global consensus behavior of the system \eqref{main_eq} occurs (see Lemma \ref{lem_energy}), and its limit point is near the global minimizer $x_*$ with sufficiently large $\alpha > 0$. Since we are dealing with the one dimensional problem, if the global minimizer $x_*$ is  between initial positions of two particles, there are only two cases: either the consensus asymptotically occurs at the global minimizer $x_*$ or at least one of two particles hits the global minimizer in a finite time. Since there is nothing to prove in the first case, our analysis focuses on the moment when one particle just passes the global minimizer.

We first recast our two-particle system as
\begin{equation}\label{two_sys}
(x^1_t)'  = \lambda \psi^2_t(x^2_t-x^1_t) \quad \mbox{and} \quad (x^2_t)'= \lambda \psi^1_t(x^1_t-x^2_t).
\end{equation}

In the lemma below, we begin by dealing with some special objective functions. 

\begin{lemma}\label{lem1} Let $a,b \in \R$ with $a<b$, and suppose that the objective function $f$ is Lipschitz continuous on $[a,b]$. Moreover, we assume that there exist disjoint intervals $A,B \subset [a,b]$, and a positive constant $c_f$ such that $|f(x)-f(y)|\geq c_f|x-y|$ for all $x \in A$ and $y \in B$. If there exist  nonnegative constants $0 \leq T_0 < T \leq \infty$ such that $(x^1_t, x^2_t) \in A \times B$ for all $t \in [T_0,T)$, then we have $|x^1_T - x^1_{T_0}| \leq \frac{\ln 2}{\alpha c_f}$ if $f(A) < f(B)$\footnote{For sets $E, F \subset \R$, we denote by $E < F$ if $x<y$ for all $x \in E$ and $y \in F$, and $f(E) := \{f(x):x\in E\}$.} and $|x^2_T - x^2_{T_0}| \leq \frac{\ln 2}{\alpha c_f}$ if $f(B) < f(A)$. In particular, if $T_0 = 0$, $T = \infty$, and the initial data are given by  $x^1_0=x_* = a$, then we have
\begin{equation}\label{bdd_lin}
|x_\infty-x_*|\leq \frac{\ln 2}{\alpha c_f}.
\end{equation}
\end{lemma}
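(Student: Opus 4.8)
The plan is to bound the displacement of the ``slow'' particle directly from \eqref{two_sys}. Treat first the case $f(A)<f(B)$ (since $A,B$ are disjoint intervals and $f$ is continuous, $f(A)$ and $f(B)$ are disjoint intervals of $\R$, so one of the two orderings must hold). Because $\psi^2_t\ge0$ and, by Lemma~\ref{lem_energy}~(i), $x^2_t-x^1_t=(x^2_0-x^1_0)e^{-\lambda t}$ keeps a constant sign, the map $t\mapsto x^1_t$ is monotone on $[T_0,T)$; being also bounded (it stays in $A\subset[a,b]$), it has a limit as $t\uparrow T$ even when $T=\infty$, and
\[
|x^1_T-x^1_{T_0}|=\int_{T_0}^T|(x^1_t)'|\,dt=\int_{T_0}^T\lambda\,\psi^2_t\,|x^2_t-x^1_t|\,dt .
\]
So everything reduces to estimating this integral.

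For $t\in[T_0,T)$ we have $x^1_t\in A$ and $x^2_t\in B$, hence $f(x^2_t)-f(x^1_t)=|f(x^2_t)-f(x^1_t)|\ge c_f|x^2_t-x^1_t|$, and therefore
\[
\psi^2_t=\frac{\omega_f^\alpha(x^2_t)}{\omega_f^\alpha(x^1_t)+\omega_f^\alpha(x^2_t)}=\frac{1}{1+e^{\alpha(f(x^2_t)-f(x^1_t))}}\le\frac{1}{1+e^{\alpha c_f|x^2_t-x^1_t|}} .
\]
The key step is then a change of variables: set $\phi(t):=|x^2_t-x^1_t|=|x^2_0-x^1_0|e^{-\lambda t}$, which is strictly decreasing with $\phi'(t)=-\lambda\phi(t)$, so $\lambda\phi(t)\,dt=-d\phi$ and $\phi$ carries $[T_0,T)$ onto $(\phi(T),\phi(T_0)]\subset(0,\infty)$. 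Combining the last two displays,
\[
|x^1_T-x^1_{T_0}|\le\int_{T_0}^T\frac{\lambda\phi(t)}{1+e^{\alpha c_f\phi(t)}}\,dt=\int_{\phi(T)}^{\phi(T_0)}\frac{d\phi}{1+e^{\alpha c_f\phi}}\le\int_0^\infty\frac{d\phi}{1+e^{\alpha c_f\phi}}=\frac{\ln2}{\alpha c_f},
\]
which is the claimed bound. The case $f(B)<f(A)$ is handled identically with the indices $1$ and $2$ exchanged, giving the bound for $|x^2_T-x^2_{T_0}|$.

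For the last assertion, take $T_0=0$, $T=\infty$, $x^1_0=x_*=a$ (so $x^1_0\in A$, and the relevant ordering is $f(A)<f(B)$). Then $(x^1_t,x^2_t)\in A\times B\subset[a,b]^2$ for all $t\ge0$, so each particle is bounded and monotone, hence convergent; Lemma~\ref{lem_energy}~(i) makes their difference vanish in the limit, so $x^1_t,x^2_t\to x_\infty$ for a common value $x_\infty$. Letting $t\to\infty$ in the estimate above with $T_0=0$ yields $|x_\infty-x_*|=\lim_{t\to\infty}|x^1_t-x^1_0|\le\frac{\ln2}{\alpha c_f}$, which is \eqref{bdd_lin}.

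I do not anticipate a genuine obstacle here: the only points needing care are the monotonicity/change-of-variables bookkeeping and the elementary identity $\int_0^\infty(1+e^{\alpha c_f\phi})^{-1}\,d\phi=\frac{\ln2}{\alpha c_f}$, which is the source of the constant $\ln2$. The real role of the lemma is to isolate, in a clean form, the linear-separation mechanism that will later drive the proof of Theorem~\ref{main_thm}.
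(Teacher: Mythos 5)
Your proof is correct and follows essentially the same route as the paper: the same representation $x^1_T-x^1_{T_0}=\lambda\int_{T_0}^T\psi^2_t(x^2_t-x^1_t)\,dt$, the same bound $\psi^2_t\le (1+e^{\alpha c_f|x^2_t-x^1_t|})^{-1}$ from the separation hypothesis, and the same exponential change of variables $\tau=|x^2_t-x^1_t|$. The only cosmetic difference is that you extend the resulting integral to $\int_0^\infty(1+e^{\alpha c_f\tau})^{-1}\,d\tau=\frac{\ln 2}{\alpha c_f}$, whereas the paper evaluates the finite integral explicitly and then bounds it; both give the identical constant.
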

\begin{proof} Under the assumption on $f$, we first readily find $f(A) < f(B)$ or $f(B) < f(A)$. Without loss of generality, we may assume that $A<B$ and $f(A) < f(B)$. Indeed, a simple modification of the following argument can be applied to the other three cases.

By Lemma \ref{lem_energy} (i), we first find that $a \leq x^1_{T_0} \leq x^1_{t} \leq x^2_{t} \leq x^2_{T_0} \leq b$ for all $t \in [T_0,T)$. Then it follows from \eqref{two_sys} that 
\begin{equation}\label{gd_eq}
0 \leq x^1_{T} -x^1_{T_0} = \int_{T_0}^T  \frac{dx^1_t}{dt}\,dt = \lambda\int_{T_0}^T \psi^2_t (x^2_t - x^1_t)\,dt.
\end{equation}
We observe that for $t \in [T_0,T)$,
\[
\psi^2_t=\frac{\omega^\alpha_f(x^2_t)}{\omega^\alpha_f(x^1_t)+\omega^\alpha_f(x^2_t)}=\frac{1}{\omega^\alpha_f(x^1_t)/\omega^\alpha_f(x^2_t)+1}
\leq \frac{1}{e^{\alpha(f(x^2_t)-f(x^1_t))} +1} 
\leq \frac{1}{e^{\alpha c_f(x^2_t-x^1_t)} +1}, 
\]
and
$x^2_t-x^1_t=(x^2_{T_0}-x^1_{T_0})e^{-\lambda (t-T_0)}$. Thus, by the change of variables  $\tau:= (x^2_{T_0}-x^1_{T_0})e^{-\lambda (t-T_0)}$, we obtain
\begin{align*}
x^1_{T} -x^1_{T_0} &\leq \lambda\int_{T_0}^{T} \frac{x^2_t - x^1_t}{e^{\alpha c_f(x^2_t-x^1_t)} +1}\,dt \leq \int_{0}^{x^2_{T_0}-x^1_{T_0}} \frac{1}{e^{\alpha c_f\tau}+1}\,d\tau \cr
&=x^2_{T_0}-x^1_{T_0} - \frac{1}{\alpha c_f} \lt(\ln(e^{\alpha c_f(x^2_{T_0}-x^1_{T_0})} +1) - \ln 2\rt)  \leq \frac{\ln 2}{\alpha c_f}.
\end{align*}
We notice that the upper bound is independent of both $T_0$ and $T$. If $T_0 = 0$, $T = \infty$, and the initial data are given by  $x^1_0=x_* = a$, then we deduce the estimate \eqref{bdd_lin}. This completes the proof.
\end{proof}

\begin{remark} (i) In Lemma \ref{lem1}, we do not require any largeness of $\alpha$ for the error estimate. 

(ii) Let us comment on the upper bound estimate in Lemma \ref{lem1}. If the objective function $f$ is simply given by a straight line, i.e. $f(x)=x$ and $x^1_0 = x_*< x^2_0$, then we can find 
\[
|x_\infty - x_*| \geq \frac{\ln2-\e}{\alpha},
\] 
where $\e>0$ can be arbitrarily small provided that $\alpha (b-a)$ is large enough. Indeed, we can use almost the same argument as in Lemma \ref{lem1} to get
\[
 x_\infty - x_* =   \int_0^{b-a} \frac{d\tau}{1+e^{\alpha  \tau}} =b-a + \frac{1}{\alpha  }\lt(\ln 2 - \ln\lt(1 + e^{\alpha   (b-a)}\rt)\rt) =  \frac{1}{\alpha}\lt(\ln 2 - \ln\lt( 1+ \frac{1}{e^{\alpha  (b-a)}}\rt)\rt).
\]
Since we are considering the case $\alpha > 0$ large enough, this shows that the result of Lemma \ref{lem1} is quite sharp and $|x_\infty - x_*| = \Theta(\alpha^{-1})$ for $\alpha \gg 1$.
\end{remark}

We next take into account the convex optimization problem, i.e. the objective function $f$ is assumed to be convex. 

\begin{lemma}\label{lem2} Let $a,b \in \R$ with $a<b$, and suppose $f \in \calC^2(a,b)\cap \calC[a,b]$. Moreover, we assume that there exists a positive constant $c_f$ such that $f$ satisfies $f''(x)\geq c_f>0$ in $(a,b)$ and $f'(x_*)=0$ for $x_*\in (a, b)$. If $x_* \in (x^1_0, x^2_0) \subset [a,b]$, then we have 
\[
|x_\infty-x_*|\leq \frac{\sqrt{\ln 2}}{\sqrt{\alpha c_f}}.
\]
\end{lemma}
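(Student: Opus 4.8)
The plan is to follow the strategy of Lemma~\ref{lem1}, with the linear lower bound on $f$ there replaced by the quadratic one coming from $f''\ge c_f$ and $f'(x_*)=0$.

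\textbf{Step 1: reduction to the instant a particle passes $x_*$.} By Lemma~\ref{lem_energy}(i) we have $x^2_t-x^1_t=(x^2_0-x^1_0)e^{-\lambda t}>0$, so \eqref{two_sys} forces $(x^1_t)'>0$ and $(x^2_t)'<0$; hence $x^1_t$ increases and $x^2_t$ decreases to a common limit $x_\infty\in(x^1_0,x^2_0)$. If $x_\infty=x_*$ the estimate is trivial; the case $x_\infty<x_*$ is symmetric under the reflection $x\mapsto-x$ (which interchanges the two particles), so I would assume $x_\infty>x_*$. Since $x^1$ is continuous, strictly increasing, starts below $x_*$ and tends to $x_\infty>x_*$, there is a finite $T>0$ with $x^1_T=x_*$; for $t\ge T$ then $x_*\le x^1_t<x^2_t$, and $x^1_t,x^2_t\in[x_*,x^2_T]\subset(a,b)$ (using $T>0$ and that $x^2$ strictly decreases), so $f$ is $\calC^2$ along the trajectory on $[T,\infty)$.

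\textbf{Step 2: an integral bound for $x_\infty-x_*$.} For $s\in[x^1_t,x^2_t]\subset[x_*,b)$ with $t\ge T$ one has $f'(s)=\int_{x_*}^s f''\ge c_f(s-x_*)\ge c_f(s-x^1_t)$; integrating in $s$,
\[
f(x^2_t)-f(x^1_t)\ \ge\ \tfrac{c_f}{2}\,(x^2_t-x^1_t)^2,\qquad t\ge T .
\]
Since $\psi^2_t=\bigl(1+e^{\alpha(f(x^2_t)-f(x^1_t))}\bigr)^{-1}$, this gives $\psi^2_t\le\bigl(1+e^{\frac{\alpha c_f}{2}(x^2_t-x^1_t)^2}\bigr)^{-1}$. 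Integrating the first identity in \eqref{two_sys} over $[T,\infty)$ and using $x^1_T=x_*$,
\[
x_\infty-x_*\ =\ \lambda\int_T^\infty\psi^2_t\,(x^2_t-x^1_t)\,dt\ \le\ \lambda\int_T^\infty\frac{x^2_t-x^1_t}{1+e^{\frac{\alpha c_f}{2}(x^2_t-x^1_t)^2}}\,dt .
\]
Because $x^2_t-x^1_t=(x^2_T-x^1_T)e^{-\lambda(t-T)}$, the change of variables $\tau=x^2_t-x^1_t$ reduces the right-hand side to $\int_0^{x^2_T-x^1_T}\!\bigl(1+e^{\frac{\alpha c_f}{2}\tau^2}\bigr)^{-1}d\tau\le\int_0^\infty\!\bigl(1+e^{\frac{\alpha c_f}{2}\tau^2}\bigr)^{-1}d\tau$.

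\textbf{Step 3: the scalar inequality (the main obstacle).} It remains to establish
\[
\int_0^\infty\frac{d\tau}{1+e^{\frac{\alpha c_f}{2}\tau^2}}\ \le\ \frac{\sqrt{\ln 2}}{\sqrt{\alpha c_f}} ,
\]
which after rescaling $\tau$ becomes the universal inequality $\int_0^\infty(1+e^{w^2/2})^{-1}\,dw\le\sqrt{\ln 2}$. The constant here is genuinely delicate: the left-hand side equals $\sqrt{\pi/2}\,\eta(1/2)\approx0.758$ (with $\eta$ the Dirichlet eta function), which lies below $\sqrt{\ln2}\approx0.833$, yet the naive Gaussian comparisons $\frac1{1+e^x}\le e^{-x}$ and $\frac1{1+e^x}\le\frac12e^{-x/2}$ only yield the larger constants $\sqrt{\pi/2}$ and $\frac{\sqrt\pi}{2}$. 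I would therefore split $[0,\infty)$ at $w=2$: on $[0,2]$ use $\frac1{1+e^{w^2/2}}\le\frac12e^{-w^2/4}$, whose integral over $[0,2]$ equals $\int_0^1e^{-u^2}du\le\int_0^1\!\bigl(1-u^2+\tfrac{u^4}{2}-\tfrac{u^6}{6}+\tfrac{u^8}{24}\bigr)du<0.748$; on $[2,\infty)$ use $\frac1{1+e^{w^2/2}}\le e^{-w^2/2}\le\tfrac w2e^{-w^2/2}$, whose integral is $\tfrac12e^{-2}<0.072$. The sum is below $0.82<\sqrt{\ln2}$, and combining this with Step~2 gives $|x_\infty-x_*|\le\sqrt{\ln2}/\sqrt{\alpha c_f}$. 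As in Lemma~\ref{lem1}, no largeness of $\alpha$ enters anywhere. (If preferred, the closed form $\int_0^\infty(1+e^{w^2/2})^{-1}dw=\sqrt{\pi/2}\,\eta(1/2)<\sqrt{\ln2}$ may simply be quoted instead of the split estimate.)
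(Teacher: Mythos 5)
Your proof is correct, and it diverges from the paper's at the decisive step. Both arguments share the same setup (monotone convergence of the two particles, reduction to the time $T$ at which $x^1$ reaches $x_*$, and the integral identity $x_\infty-x_*=\lambda\int_T^\infty\psi^2_t(x^2_t-x^1_t)\,dt$), but they then part ways. The paper does \emph{not} use the quadratic bound $f(x^2_t)-f(x^1_t)\geq\tfrac{c_f}{2}(x^2_t-x^1_t)^2$ directly; instead it observes that $\psi^2_t\leq\psi^1_t$ forces $x_\infty\leq\tfrac12(x^1_t+x^2_t)$, whence Taylor's theorem gives the \emph{linear} lower bound $f(x^2_t)-f(x^1_t)\geq c_f(x_\infty-x_*)(x^2_t-x^1_t)$ with the unknown quantity $x_\infty-x_*$ absorbed into the Lipschitz-type constant. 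Feeding this into Lemma \ref{lem1} yields the self-referential inequality $x_\infty-x_*\leq\frac{\ln 2}{\alpha c_f(x_\infty-x_*)}$, which is solved for $x_\infty-x_*$; no new integral needs to be evaluated. Your route keeps the quadratic exponent and must therefore establish the universal inequality $\int_0^\infty(1+e^{w^2/2})^{-1}dw\leq\sqrt{\ln 2}$, which you correctly identify as the delicate point and verify by a split estimate (your numerics check out: the two pieces are bounded by $0.748$ and $\tfrac12 e^{-2}<0.072$, summing below $\sqrt{\ln 2}\approx 0.833$). The trade-off: the paper's bootstrap is softer and reuses Lemma \ref{lem1} with no computation, and the constant $\sqrt{\ln 2}$ emerges exactly; your direct integration costs a numerical verification but actually proves the strictly sharper bound $|x_\infty-x_*|\leq\sqrt{\pi/2}\,\eta(1/2)\,(\alpha c_f)^{-1/2}$ with $\sqrt{\pi/2}\,\eta(1/2)\approx 0.758<\sqrt{\ln 2}$, which is consistent with (and closer to) the lower bound $\approx\frac{\sqrt\pi}{8}\alpha^{-1/2}$ exhibited in Remark \ref{rmk_cov}. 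Either argument is acceptable; if you keep yours, state the split estimate as a standalone scalar lemma so the main proof stays clean.
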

\begin{proof} If $x_\infty = x_*$, i.e. two particles converges to $x_*$ as $t \to \infty$, we are done. Otherwise, exactly one of two particles hits the global minimizer $x_*$ in a finite time. Without loss of generality, we may assume that $x^1_t$ hits $x_*$ at some time $T_0$, i.e. $x^1_{T_0} = x_*$ and $x^1_t > x_*$ for $t > T_0$. Let $A = [x_*, x_\infty)$ and $B = (x_\infty, x^2_{T_0}]$. Then, for $t > T_0$, we obtain $x^1_t \in A$ and $x^2_t \in B$. On the other hand, since $f$ is increasing on $[x_*, b]$, $\psi^2_t \leq \psi^1_t$, and this implies
\[
x_\infty - x^1_t = \lambda\int_t^\infty   \psi^2_s(x^2_s-x^1_s)\,ds \leq \lambda\int_t^\infty   \psi^1_s(x^2_s-x^1_s)\,ds = x^2_t - x_\infty.
\]
Thus we have
\[
f(x^2_t) - f(x^1_t) \geq f'(x^1_t)(x^2_t - x^1_t) + \frac12 c_f(x^2_t - x^1_t)^2 \geq c_f (x^1_t - x_*)(x^2_t - x^1_t) + \frac12 c_f(x^2_t - x^1_t)^2 \geq c_f(x_\infty - x_*)(x^2_t - x^1_t) 
\]
for $t > T_0$, due to $f'(x_*) = 0$. Then, by Lemma \ref{lem1}, we obtain
\[
x_\infty - x_* \leq \frac{\ln 2}{\alpha c_f(x_\infty - x_*)}.
\]
This completes the proof.
\end{proof}
\begin{remark}\label{rmk_cov} Similarly as before, we discuss the sharpness of the error estimate obtained in Lemma \ref{lem2}. Let us consider the quadratic objective function $f = x^2$  and assume $a = x_*=0 < b$. Then we claim that 
\[
|x_\infty - x_*| = x_\infty \geq \frac{c_0 - \epsilon}{\sqrt{\alpha}},
\]
where $c_0 > 0$ is independent of $\alpha$, and $\epsilon > 0$ can be arbitrarily small provided that $\sqrt{\alpha} b$ is large enough.

We notice that $0 \leq x^1_t \leq x_\infty$ and
\[
\psi^2_t = \frac{1}{1+e^{\alpha((x^2_t)^2-(x^1_t)^2)}} = \frac{1}{1+e^{\alpha  (\tau + 2x^1_t)\tau}},
\]
where $\tau = x^2_t-x^1_t > 0$. This together with the identity \eqref{gd_eq} yields
\[
x_\infty = x_\infty -x_* = \int_0^b \frac{d\tau}{1+e^{\alpha  (\tau + 2x^1_t)\tau}} \geq \int_0^\infty \frac{d\tau}{1+e^{\alpha  (\tau + 2x_\infty)\tau}} - \int_b^\infty \frac{d\tau}{1 + e^{\alpha \tau^2}}.
\]
Here we estimate 
\[
\int_0^\infty \frac{d\tau}{1+e^{\alpha  (\tau + 2x_\infty)\tau}} \geq \frac12 \int_{2x_\infty}^\infty e^{-\alpha \tau^2} d\tau = \frac12\int_0^\infty e^{-\alpha \tau^2}\,d\tau - \frac12 \int_0^{2x_\infty} e^{-\alpha \tau^2}\,d\tau \geq \frac{\sqrt\pi}{4\sqrt\alpha} - x_\infty
\]
and
\[
\int_b^\infty \frac{d\tau}{1 + e^{\alpha \tau^2}} \leq \int_b^\infty e^{-\alpha \tau^2}d\tau \leq \int_b^\infty \frac{\tau}{b}e^{-\alpha \tau^2}d\tau = \frac{1}{2\alpha b} e^{-\alpha b^2}.
\]
Hence we have
\[
|x_\infty - x_*|\geq \frac1{4\sqrt\alpha}\lt( \frac{\sqrt\pi}{2} - \frac{1}{\sqrt{\alpha} b} e^{-\alpha b^2}\rt),  
\]
and this concludes the desired result.
\end{remark}

We now want to apply the previous estimates to treat the general case. Before proceeding, we introduce a definition of a set called {\it calyx}.

\begin{definition}  If $f \in \calC^2(a,b) \cap \calC[a,b]$ has a unique global minimizer $x_*$ satisfying $f'(x_*)=0$, we say $[x_*-r, x_*+r]$ is a {\it calyx} of $f$ if there exist $c,r>0$ such that $f$ is convex on $[x_*-r,x_*+r]$ and $f(x)-f(y) \geq c|x-y|$ for all $x,y \in [a,b]$ satisfying either $x-x_*\geq r \geq y-x_*\geq 0$ or $x-x_* \leq -r \leq y-x_* \leq 0$.
\end{definition}

We then present a characterization for a $\calC^2$-function with a unique global minimizer. 

\begin{lemma}\label{lem_cal} Let $a,b \in \R$ with $a<b$, and suppose $f \in \calC^2(a,b)\cap \calC[a,b]$ has a unique global minimizer $x_*$. We further assume that $x_*$ is a local minimizer as well, i.e. $f'(x_*)=0$, and $x_*$ might be the end point $a$ or $b$. If $f''(x_*) \neq 0$, then there exist $c>0$ and $r_1>r_2>0$ such that $f''(x) \geq c$ for $|x-x_*| \leq r_1$ and the interval $[x_* - r_2, x_* + r_2]$ is the calyx of $f$.
\end{lemma}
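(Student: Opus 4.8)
Since $x_*$ is a global minimizer of $f$ with $f'(x_*)=0$ and $f\in\calC^2$, a second‑order Taylor expansion at $x_*$ forces $f''(x_*)\ge 0$, so the hypothesis $f''(x_*)\ne 0$ yields $f''(x_*)>0$. I would set $c:=\tfrac12 f''(x_*)>0$ and use continuity of $f''$ (extended to $x_*$ when $x_*\in\{a,b\}$) to find $r_1>0$ with $f''(x)\ge c$ for every $x\in[a,b]$ with $|x-x_*|\le r_1$; this is already the first conclusion and it shows $f$ is convex on $I_1:=[x_*-r_1,x_*+r_1]\cap[a,b]$. Integrating $f''$ out from $x_*$ and using $f'(x_*)=0$ gives the elementary slope bound $f'(t)\ge c(t-x_*)$ for $t\in I_1$, $t\ge x_*$, and $f'(t)\le c(t-x_*)$ for $t\in I_1$, $t\le x_*$.

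Next I would produce $r_2$. Let $K:=[a,b]\setminus(x_*-r_1,x_*+r_1)$, a compact set not containing $x_*$; by uniqueness of the global minimizer and continuity of $f$ on $[a,b]$, $\mu:=\min_K f>f(x_*)$ (with the convention $\mu=+\infty$ if $K=\varnothing$). By continuity of $f$ at $x_*$, choose $r_2\in(0,r_1)$ so small that $f(x)\le\tfrac12\bigl(f(x_*)+\mu\bigr)$ whenever $|x-x_*|\le r_2$. I claim $[x_*-r_2,x_*+r_2]$ is a calyx of $f$, with this radius $r_2$ and constant $c_{\mathrm{cal}}:=\min\bigl\{\tfrac12 c\,r_2,\ \tfrac{\mu-f(x_*)}{2(b-a)}\bigr\}>0$. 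Convexity of $f$ on $[x_*-r_2,x_*+r_2]\subseteq I_1$ is already known, so only the two‑sided Lipschitz‑type inequality in the definition remains.

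I would check $f(x)-f(y)\ge c_{\mathrm{cal}}|x-y|$ in the case $x-x_*\ge r_2\ge y-x_*\ge 0$, the case $x-x_*\le -r_2\le y-x_*\le 0$ being the mirror image. Split on the location of $x$. If $x_*+r_2\le x\le x_*+r_1$ (so $x,y\in I_1$ with $x\ge x_*+r_2\ge y\ge x_*$), then
\begin{align*}
f(x)-f(y) &=\int_y^{x_*+r_2}f'(t)\,dt+\int_{x_*+r_2}^{x}f'(t)\,dt\\
&\ge \int_y^{x_*+r_2}c(t-x_*)\,dt+\int_{x_*+r_2}^{x}c(t-x_*)\,dt,
\end{align*}
where the first integral equals $\tfrac{c}{2}\bigl(r_2^2-(y-x_*)^2\bigr)\ge\tfrac{c\,r_2}{2}\bigl((x_*+r_2)-y\bigr)$ and the second is $\ge\tfrac{c\,r_2}{2}\bigl(x-(x_*+r_2)\bigr)$; adding gives $f(x)-f(y)\ge\tfrac{c\,r_2}{2}(x-y)$. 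If instead $x\ge x_*+r_1$, then $x\in K$, so $f(x)\ge\mu$ while $f(y)\le\tfrac12(f(x_*)+\mu)$ since $y\in[x_*,x_*+r_2]$, whence $f(x)-f(y)\ge\tfrac12(\mu-f(x_*))\ge\tfrac{\mu-f(x_*)}{2(b-a)}(x-y)$ because $0<x-y\le b-a$. Either way $f(x)-f(y)\ge c_{\mathrm{cal}}(x-y)=c_{\mathrm{cal}}|x-y|$, which proves the claim.

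The main obstacle is precisely the ``near'' regime $x_*+r_2\le x\le x_*+r_1$ with $y$ close to $x_*+r_2$: there both $x-y$ and $f(x)-f(y)$ degenerate, so one cannot use crude size bounds and must exploit the quadratic lower bound $f'(t)\ge c(t-x_*)$ together with the split of the integral at $x_*+r_2$; the rest is bookkeeping. When $x_*$ coincides with an endpoint $a$ or $b$, exactly one of the two clauses in the definition of calyx is non‑vacuous and the computation above applies verbatim to that side.
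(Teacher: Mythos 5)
Your proof is correct and follows essentially the same strategy as the paper: choose $r_1$ by continuity of $f''$, use the gap between $\min f$ on $[a,b]\setminus(x_*-r_1,x_*+r_1)$ and $f(x_*)$ for the far regime, and a convexity-based slope bound for the near regime $r_2\le x-x_*\le r_1$. The only cosmetic differences are that you integrate $f'(t)\ge c(t-x_*)$ and split the integral at $x_*+r_2$ where the paper applies Taylor's theorem centered at $y$, and you pick $r_2$ by continuity of $f$ where the paper picks it explicitly via the upper quadratic bound; both yield the same constants up to inessential factors.
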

\begin{proof}
Since $f \in \calC^2(a,b)$ and $f''(x_*) \neq 0$, there exist positive constants $C_1,c_1$, and $r_1$ such that $c_1 \leq f''(x) \leq C_1$ for all $|x-x_*| \leq r_1$. This, together with Taylor's theorem and $f'(x_*)=0$, yields
\begin{equation}\label{est_1}
  f_* + \frac{1}{2}c_1(x-x_*)^2 \leq f(x) \leq f_* + \frac{1}{2}C_1(x-x_*)^2 
\end{equation}
for all $|x-x_*| \leq r_1$, where $f_* := f(x_*)$. On the other hand, by the assumption that $f$ attains its global minimum at a unique point $x_*$, we obtain
\begin{equation}\label{est_2}
    f_1:= \inf_{[a,b]\setminus (x_*-r_1,x_*+r_1)}f > f_*. %:= f(x_*).
\end{equation}
We then choose $0<r_2 = \min\{ \sqrt{\delta/C_1}, r_1\}$  with $\delta : = f_1 - f_* >0$ to deduce $f(x_* + r_2) \leq f_* + \delta/2 < f_1 \leq f(x_* + r_1)$. In particular, we have $r_1>r_2>0$.

Let us now assume $x-x_*\geq r_2 \geq y-x_*\geq 0$. We then consider the following two cases.

(i) ($x-x_* \geq r_1$): In this case, it follows from \eqref{est_1} and \eqref{est_2} that
\[
   f(x)-f(y) \geq f_1 - (f_*+\frac{1}{2}\delta)=\frac{1}{2}\delta \geq  \frac{\delta}{2(b-x_*)}(x-y),
\]
due to $x-y \leq b-x_*$.

(ii) ($r_2 \leq x-x_* < r_1$): By Taylor's theorem and the assumption $f'(x_*) = 0$, we estimate
\[
f(x)-f(y) \geq f'(y)(x-y)+\frac{1}{2}c_1(x-y)^2 \geq c_1(y-x_*)(x-y)+\frac{1}{2}c_1(x-y)^2 \geq \frac{1}{2}c_1(x-x_*)(x-y) \geq  \frac{1}{2}c_1r_2(x-y).
\]
Applying a similar argument to the case $x-x_* \leq -r_2 \leq y-x_* \leq 0$ and combining the resulting estimate with the above yield $f(x)-f(y) \geq c_2|x-y|$ with 
\[
c_2 = \min \left\{\frac{\delta}{2\max\{(b-x_*),(x_*-a)\}}, \ \frac{1}{2}c_1r_2\right\} > 0.
\]
\end{proof}

We are now ready to provide the details of proof for our main result.

\begin{proof}[Proof of Theorem \ref{main_thm}] 
If $x_\infty = x_*$, then we are done. Otherwise, one of two particles passes the point $x_*$ in a finite time. By the initialization procedure, without loss of generality, we may assume that $x^1_0 =x_*$. 

We invoke Lemma \ref{lem_cal} and consider two cases: (i) $x^2_0 - x_* \leq r_2$ and (ii) $x^2_0 - x_* > r_2$. Since the case (i) implies that $x_\infty$ lies in a calyx of $f$, thus the desired error estimate follows by employing Lemma \ref{lem2}. On the other hand, for the second one, we take $\alpha_0 > 0$ such that $\frac{1}{\alpha_0 c_2}=r_2$. We then claim for any $\alpha > \alpha_0$, $x^2_t$ passes the point $x_*+r_2$ in a finite time. Suppose not, i.e. either $x^1_t$ passes $x_*+r_2$ in a finite time or $x^1_t$ converges to that point as time goes to infinity. This implies that 
there exists a $T_1 \in (0,\infty]$ such that $x^1_{T_1} = x_* + r_2$, then it is clear $x^1_t-x_* \leq r_2 \leq x^2_t-x_*$ for $t < T_1$. Thus, by Lemmas \ref{lem1} and \ref{lem_cal}, we have
\[
r_2 = x^1_{T_1}-x_* \leq \frac{\ln 2}{\alpha c_2} <  \frac{\ln2 }{\alpha_0 c_2} < r_2,
\]
and this leads to a contradiction. Thus we only consider the case that $x^2_t$ passes $x_*+r_2$ in a finite time, say $T_2$, for any $\alpha > \alpha_0$. Note that, similarly as the above, we get 
\[
x^1_{T_2} - x_* \leq \frac{\ln 2}{\alpha c_2}
\] 
for $\alpha > \alpha_0$. On the other hand, for $t \geq T_2$, we can apply Lemma \ref{lem2} to deduce 
\[
x^1_t - x^1_{T_2} \leq \frac{\sqrt{\ln 2}}{\sqrt{\alpha c_1}}.
\]
We finally combine the above two estimates and let $t \to \infty$ to conclude our desired result.
\end{proof}

%%%%%%%%%%%%%%%%%%%%%%%%%%%%%%%%%%%%%%%%%%%%%%%%%%%%%%%%%
%
%
%
%
%
%%%%%%%%%%%%%%%%%%%%%%%%%%%%%%%%%%%%%%%%%%%%%%%%%%%%%%%%%

\subsection{Remark on the case $N>2$}\label{ssec:N>2} In this subsection, we discuss the analysis of the consensus based optimization model \eqref{main_eq} with $N > 2$. It is observed in \cite{CCTT18, PTTM17} that considering many interacting particles may be helpful to find the global minimizer since they can explore a larger portion of the landscape of the graph of the objective function $f$. 

However, in the one dimensional case, we find that the two-particle consensus based optimization model provides a better information on the global minimizer. As stated in the example below, in some special case, the error between $x_\infty$ and $x_*$ increases as the number of particles $N$ gets larger.

\begin{example} Let $a<b$, $f(x) = x$, and fix $j \in \{1,\dots,N\}$. Suppose that the initial data $\bfx_0$ is given by $x^i_0 = a$ for $i=1,\dots,j$ and $x^i_0 = b$ otherwise. In this case, by Lemma \ref{lem_energy} (i), we readily get $x^i_t = x^1_t$ for $i=1,\dots,j$ and $x^i_t = x^N_t$ otherwise for all $t \geq 0$. Note that $a = \argmin_{[a,b]}f$, and 
\[
\psi^i_t = \frac{\omega^\alpha_f(x^i_t)}{j\omega^\alpha_f(x^1_t) + (N-j)\omega^\alpha_f(x^N_t)}, \qquad i=1,\dots,N, \quad t \geq 0.
\]
Similarly as in Lemma \ref{lem1}, we get
\[
0 \leq x_\infty - x_* = \lambda\sum_{i=1}^N\int_{0}^{\infty} \psi^i_t (x^i_t-x^1_t)\,dt = \lambda(N-j)\int_{0}^{\infty} \psi^N_t (x^N_t-x^1_t)\,dt,
\]
where 
\[
\psi^N_t = \frac{1}{j e^{\alpha(x^N_t - x^1_t)} + (N-j)}.
\]
Then we again use the change of variable $\tau := x^N_t-x^1_t = (b-a)e^{-\lambda t}$ to estimate
\[
x_\infty - x_* = (N-j) \int_0^{b-a} \frac{d\tau}{j e^{\alpha \tau} + (N-j)} = \frac1\alpha \ln \lt(\frac{N}{j + (N-j)e^{-\alpha (b-a)}} \rt).
\]
Thus, if $j=1$ and $\alpha \gg 1$, then the error $|x_\infty - x_*|$ has an asymptotic order $O((\ln N)/\alpha)$. 
\end{example}
\begin{remark} One can also use the similar argument as in the above example combined with that of Remark \ref{rmk_cov} to consider the have an asymptotic order $O(\sqrt{(\ln N)/\alpha})$ for the error $|x_\infty - x_*|$ when $f = x^2$.
\end{remark}

% ----------------------------------------------------------------
%\bibliographystyle{plain}
%\bibliography{CBO}
% ----------------------------------------------------------------

\end{document}